\newcommand{\R}{\mathbb R}
\numberwithin{equation}{section}
\newtheorem{theorem}{Theorem}[section]
\newtheorem{proposition}[theorem]{Proposition}
\newtheorem{remark}[theorem]{Remark}
\newtheorem{lemma}[theorem]{Lemma}
\newtheorem{definition}[theorem]{Definition}
\begin{document}

\title{ $L^{p}$-solutions of the Navier-Stokes equation with fractional Brownian noise}

\author{Benedetta Ferrario\thanks{Corresponding Author:
Dipartimento di Matematica, Universit\`{a} di Pavia,
Italy. E-mail: \textsl{benedetta.ferrario@unipv.it}, Phone (+39) 0382 985655}, Christian
Olivera\thanks{Departamento de Matem\'{a}tica, Universidade Estadual de
Campinas, Brazil. E-mail: \textsl{colivera@ime.unicamp.br} }}

\date{\today}

\maketitle

\textit{Key words and phrases:
stochastic partial differential equations, Navier-Stokes equations, mild solution, fractional Brownian motion.}

\vspace{0.3cm} \noindent {\bf MSC2010 subject classification:} 60H15, 
 76D06, 76M35 , 35Q30.

%
\begin{abstract}
We study the Navier-Stokes equations  
on a smooth bounded domain $D\subset \mathbb R^d$ ($d=2$ or 3), 
under the effect of an additive fractional Brownian noise. 
We show local existence and uniqueness of a  mild $L^p$-solution for $p>d$. 
\end{abstract}

%
\maketitle

\section {Introduction} \label{Intro}

The Navier-Stokes equations 
have been derived, more than one century ago,
by the engineer C. L. Navier to describe the motion of an incompressible Newtonian fluid.
Later, they have been reformulated by the mathematician- physicist G. H. Stokes. Since that
time, these equations continue to attract a great deal of attention due to their mathematical
and physical importance. In a seminal paper \cite{Leray}, Leray proved the global existence 
of a weak solution with finite energy. It is well known that weak solutions are unique and regular in two
spatial dimensions. In three dimensions, however, the question of regularity and
uniqueness of weak solutions is an outstanding open problem in mathematical fluid
mechanics, we refer to  excellent monographs \cite{Lema}, \cite{lion1} and  \cite{Soh}. 

More recently, stochastic versions of the  Navier-Stokes equations have been considered 
in the literature; 
first by introducing a stochastic forcing term which comes from a
 Brownian motion (see, e.g., the first results in  \cite{BT,VF}).
The addition of the white noise driven term to the basic
 governing equations is natural for both practical and theoretical applications  to take
 into account for numerical and empirical
uncertainties, and have been proposed as a model for turbulence. 
 Later on other kinds of noises have been studied. 

In this paper we  consider  the the  Navier-Stokes equations with a stochastic forcing term modelled by a 
fractional Brownian motion 
\begin{equation}\label{Navier}
 \left \{
\begin{aligned}
     &\partial_t u(t, x) =\big(\nu  \Delta u(t, x) - [u(t, x)\cdot \nabla] u(t, x)  - \nabla \pi(t, x)\big)dt +    \Phi \partial_{t} W^{{\mathcal H}}(t,x) 
    \\& \text{div } u(t, x)=0
    \\    &u(0,x)=  u_{0}(x)
\end{aligned}
\right .
\end{equation}
We fix a smooth bounded domain $D\subset \mathbb R^d$ ($d=2$ or $3$)
and consider the homogeneous Dirichlet  boundary condition. In the equation above, 
$u(t, x)\in \R^{d}$ denotes the vector velocity field at time $t$ and position $x\in D$, $\pi(t, x)$ 
denotes the pressure field,  $\nu > 0$ is the viscosity coefficient. 
In the random forcing term there appears a Hilbert space-valued  cylindrical fractional Brownian motion $W^{{\mathcal H}}$
with Hurst parameter ${\mathcal H}\in (0,1)$ and a linear bounded operator $\Phi$ to characterize the spatial covariance of the noise.

When ${\mathcal H}=\frac{1}{2}$, i.e $W^{\frac 12}$ is the Wiener process,  there is a large amount of literature on the stochastic
Navier-Stokes equation \eqref{Navier} and its abstract setting.  For an overview of the known results, recent developments, as well as further references,
we refer to   \cite{Bene}, \cite{Fla}, \cite{K}  and  \cite{VF}.
On the other hand, when  $0<{\mathcal H}< 1$ there are results by 
Fang,   Sundar and   Viens;
in \cite{Fang} they prove when $d=2$ the existence of a unique 
global solution which is 
$L^{4}$ in time and in space 
by assuming that  the Hurst parameter ${\mathcal H}$ satisfies a condition involving the regularity of  $\Phi$.

Our aim is to  deal with 
$L^{p}$-solutions of the Navier-Stokes systems \eqref{Navier} for $p>d$.
Our approach  to study $L^{p}$-solutions  is based on the concept of mild solution as in \cite{Fang}; but we deal with 
dimension $d=2$ as well as with $d=3$ and any $p>d$.

We shall prove a local existence and  uniqueness result. Some remarks on global solutions will also be given.
Let us recall also  that 
results on the local existence of mild $L^{p}$-solutions  in the deterministic setting
were established in the papers \cite{Giga}, \cite{Giga2},
\cite{Kato},\cite{Kato2}, \cite{Kato3}, \cite{W}. 

In more details, in Section \ref{sec-setting} we shall introduce the mathematical setting,
in Section \ref{sec-linear} we shall deal with the linear problem and in Section \ref{sec-ex} we shall prove our main result.


\section{Functional setting}\label{sec-setting}
In this section we introduce the functional setting to rewrite system \eqref{Navier} in abstract form.
\subsection{The functional spaces}
Let $D$ be a bounded domain 
in $\R^{d}$ ($d\ge 2$) with smooth boundary $\partial D$. For $1\le p<\infty$ we denote 
\[
L_{\sigma}^{p}= \text{  the  closure   in } [L^{p}(D)]^d  \text{ of  }\{ u\in [C_{0}^{\infty}(D)]^d, \ div \ u=0\}
\]
and
\[
G^{p}= \{ \nabla q,   q\in W^{1,p}(D) \}.
\]

We then have the following Helmholtz decomposition 
\[
[L^{p}(D)]^d= L_{\sigma}^{p} \oplus G^{p}, 
\]
where the notation $\oplus$ stands for the direct sum. In the case $p=2$ 
 the sum above reduces to the orthogonal decomposition and $L_{\sigma}^2$
is a separable Hilbert space, whose scalar product is denoted by $(\cdot,\cdot)$.

\subsection{The Stokes operator}
Let us  recall some results on the Stokes operator (see, e.g., \cite{Soh}). 

Now we fix $p$. Let $P$ be the continuous projection  from $[L^{p}(D)]^d$ onto  $L_{\sigma}^{p}$
and let $\Delta$ be
the Laplace operator in $L^p$  with zero boundary condition, so that $D(\Delta)=\{u \in [W^{2,p}(D)]^d: u|_{\partial D}=0\}$.

Now, we define the Stokes operator $A$ in $L_{\sigma}^{p}$ by $A=-P\Delta$
with domain $H^{2,p}:=L_{\sigma}^{p}\cap D(\Delta)$.  The operator $-A$ generates  a bounded analytic semigroup 
$\{S(t)\}_{t\ge 0}$ of class $C_{0}$ in $L_\sigma^p$.

In particular, for $p=2$ we set $H^2=H^{2,2}$ and the Stokes operator $A : H^{2}\rightarrow L_{\sigma}^2$
is an isomorphism, the inverse operator 
$A^{-1}$   is self-adjoint  and  compact in $L_{\sigma}^{2}$. Thus, there exists an
orthonormal basis $\{e_{j}\}_{j=1}^\infty\subset H^{2}$ of $L^{2}_{\sigma}$
  consisting of the eingenfunctions of $A^{-1}$ and such that 
the sequence of eigenvalues $\{\lambda_{j}^{-1}\}_{j=1}^\infty$, with $\lambda_{j}>0$, converges to zero as $j \to \infty$. In particular,
$\lambda_j$ behaves as $j^{\frac 2d}$ for  $j \to \infty$. Then, 
$\{e_{j}\}_j$ is also the sequence of eingenfunctions of $A$ corresponding to the eigenvalues 
$\{\lambda_{j}\}_j$. Moreover $A$ a is positive, selfadjoint 
 and densely defined operator  in  $L_{\sigma}^{2}$. Using the spectral decomposition, 
we construct positive and negative  fractional 
power operators $A^{\beta}$, $\beta\in \R$. For $\beta\ge 0 $ we have the following representation for  
$(A^{\beta}, D(A^{\beta}))$ as a linear operator in $L_{\sigma}^{2}$
\[
D(A^{\beta})=
\big\{  v\in  L_{\sigma}^{2}:\ \|v\|^2_{D(A^\beta)}=
\sum_{j=1}^\infty \lambda_{j}^{2\beta} |(v,e_{j})|^{2} < \infty   \big\},
\]
\[
A^\beta v= \sum_{j=1}^\infty \lambda_{j}^\beta (v,e_{j}) e_{j}. 
\]
For negative exponents, we get the dual space: $D(A^{-\beta})=(D(A^\beta))'$. We set $H^s=D(A^{\frac s2})$.
Let us point out that the operator $A^{-\beta}$ is an Hilbert-Schmidt
operator in $L^2_\sigma$ for any $\beta>\frac d4$; indeed, denoting by 
$\|\cdot \|_{\gamma(L^2_\sigma,L^2_\sigma)} $ the Hilbert-Schmidt
norm, we have
\[
\|A^{-\beta}\|_{\gamma(L^2_\sigma,L^2_\sigma)}^2 
:= 
\sum_{j=1}^\infty \|A^{-\beta}e_j\|^2_{L^2_\sigma}
=
\sum_{j=1}^\infty \lambda_j^{-2\beta}
\sim  
\sum_{j=1}^\infty j^{-2\beta\frac 2d}
\]
and the latter series in convergent for $2\beta\frac 2d>1$.

We also  recall (see, e.g., \cite{W}) that for any $t>0$ we have
\begin{equation}\label{stimaSpr}
\| S(t) u \|_{{L^p_\sigma}}\le  \frac{M}{t^{\frac{d}{2}(\frac 1r-\frac 1p) } } \| u \|_{L^r_\sigma} \  \text{ for } \ 1< r\le p < \infty
\end{equation}
\begin{equation}\label{stimaASp}
\|A^{\alpha} S(t) u \|_{L^r_\sigma}\le  \frac{M}{t^{\alpha}} \| u \|_{L^r_\sigma} \  \text{ for } \ 1<  r < \infty, \, \alpha >0
\end{equation}
for any $u \in L^r_\sigma$, 
where $M$ denotes different constants depending on the parameters.
Moreover we have the following result on the Hilbert-Schmidt norm of the semigroup, that we shall use later on.
What is important is the behaviour for $t$ close to $0$, let us say for $t \in (0,1)$.
\begin{lemma}\label{qsmall}
We have 
\[
\|S(t)\|_{\gamma(H^{\frac d2};L^2_\sigma)}
\le 
M(2-\ln t) \qquad\forall t\in (0,1)
\]
and for $q< \frac d2$
\begin{equation}\label{qd2}
\|S(t)\|_{\gamma(H^q;L^2_\sigma)}
\le 
\frac M{t^{\frac d4-\frac q2}} \qquad\forall t>0
\end{equation}
\end{lemma}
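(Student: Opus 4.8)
The plan is to compute the Hilbert–Schmidt norm explicitly against a diagonalizing basis and then reduce both estimates to the asymptotic analysis of an elementary scalar series. Since $\{e_j\}_j$ is an orthonormal basis of $L^2_\sigma$ consisting of eigenfunctions of $A$ with $Ae_j=\lambda_j e_j$, and $H^q=D(A^{q/2})$ carries the norm $\|v\|_{H^q}^2=\sum_j\lambda_j^{q}|(v,e_j)|^2$, the rescaled family $\{\lambda_j^{-q/2}e_j\}_j$ is an orthonormal basis of $H^q$. First I would use this basis together with $S(t)e_j=e^{-t\lambda_j}e_j$ to write
\[
\|S(t)\|_{\gamma(H^q;L^2_\sigma)}^2
=\sum_{j=1}^\infty \big\|S(t)\,\lambda_j^{-q/2}e_j\big\|_{L^2_\sigma}^2
=\sum_{j=1}^\infty \lambda_j^{-q}\,e^{-2t\lambda_j}.
\]
Everything now hinges on estimating this scalar series.

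Next I would insert the spectral asymptotics $\lambda_j\sim j^{2/d}$: there is $c_1>0$ with $c_1 j^{2/d}\le\lambda_j$ for all large $j$, so that $\lambda_j^{-q}e^{-2t\lambda_j}\le c_1^{-q}\,j^{-2q/d}e^{-2tc_1 j^{2/d}}$ up to finitely many terms. The function $x\mapsto x^{-2q/d}e^{-2tc_1 x^{2/d}}$ is strictly decreasing on $(0,\infty)$, so by integral comparison the series is controlled by its first term plus $\int_0^\infty x^{-2q/d}e^{-2tc_1 x^{2/d}}\,dx$. The substitution $y=x^{2/d}$ turns this into a Gamma integral
\[
\frac d2\int_0^\infty y^{\frac d2-q-1}e^{-2tc_1 y}\,dy
=\frac d2\,\Gamma\!\Big(\tfrac d2-q\Big)\,(2tc_1)^{-(\frac d2-q)},
\]
which converges precisely when $q<\frac d2$ and scales like $t^{-(\frac d2-q)}$. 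Taking square roots yields $\|S(t)\|_{\gamma(H^q;L^2_\sigma)}\le M\,t^{-(\frac d4-\frac q2)}$, that is \eqref{qd2}, valid for all $t>0$.

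For the borderline exponent $q=\frac d2$ the Gamma integral diverges logarithmically at the origin, which is the main obstacle and forces a separate treatment. Here the series is $\sum_j \lambda_j^{-d/2}e^{-2t\lambda_j}\sim\sum_j j^{-1}e^{-2tc_1 j^{2/d}}$, and the same integral comparison leads to $\frac d2\int_1^\infty y^{-1}e^{-2tc_1 y}\,dy$ after the substitution $y=x^{2/d}$; this is an exponential integral $E_1$ whose small-argument expansion is $E_1(z)=-\ln z+O(1)$. Thus for $t\in(0,1)$ one gets $\|S(t)\|^2_{\gamma(H^{d/2};L^2_\sigma)}\le M(1-\ln t)$, and since $\sqrt{s}\le 2+s$ for $s\ge 0$ this gives the stated $\|S(t)\|_{\gamma(H^{d/2};L^2_\sigma)}\le M(2-\ln t)$. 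The only delicate points are the bookkeeping of the finitely many small-$j$ terms discarded when passing to $\lambda_j\sim j^{2/d}$, and confirming that the logarithmic singularity is genuinely the dominant contribution as $t\to 0^+$.
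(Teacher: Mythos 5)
Your opening computation and your treatment of the cases $0\le q<\frac d2$ and $q=\frac d2$ are correct and essentially identical to the paper's: the same diagonal series $\sum_j \lambda_j^{-q}e^{-2t\lambda_j}$, the same comparison with an integral, and an equivalent substitution (your Gamma/exponential-integral form versus the paper's Gaussian form after $x=y^dt^{-d/2}$), and your square-root bookkeeping at $q=\frac d2$ via $\sqrt s\le 2+s$ is fine. But there is a genuine gap for $q<0$, a case the lemma does cover (it asserts \eqref{qd2} for all $q<\frac d2$, and negative $q$ is needed later, since Proposition \ref{pro-z} allows $\Phi\in\mathcal L(L^2_\sigma,H^q)$ with arbitrary $q\in\mathbb R$). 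Two of your steps fail there. First, the pointwise bound $\lambda_j^{-q}e^{-2t\lambda_j}\le c_1^{-q}j^{-2q/d}e^{-2tc_1j^{2/d}}$ uses only the lower Weyl bound $\lambda_j\ge c_1j^{2/d}$, which controls $\lambda_j^{-q}$ only when $q\ge0$; for $q<0$ you also need the upper bound $\lambda_j\le c_2j^{2/d}$ (harmless, since $\lambda_j\sim j^{2/d}$ is two-sided, but it must be invoked). Second, and more seriously, your claim that $x\mapsto x^{-2q/d}e^{-2tc_1x^{2/d}}$ is strictly decreasing on $(0,\infty)$ is false for $q<0$: there $x^{-2q/d}=x^{2|q|/d}$ grows, so the function increases up to the point where $x^{2/d}=\frac{-q}{2c_1t}$ and decreases only afterwards. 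Hence the comparison ``first term plus $\int_0^\infty$'' does not bound the series; this is precisely the difficulty to which the paper devotes its entire third bullet.

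The repair is short but must be stated. For a nonnegative unimodal function $a_t$ one has $\sum_{j\ge1}a_t(j)\le\int_0^\infty a_t(x)\,dx+\sup_{x>0}a_t(x)$, and here $\sup_{x>0}a_t(x)=\bigl(\frac{-q}{2c_1t}\bigr)^{-q}e^{q}\le Ct^{q}\le Ct^{q-\frac d2}$ for $t\in(0,1]$, so the Gamma integral still dominates and \eqref{qd2} follows; for $t\ge1$ the series decays exponentially, which is stronger than any power. The paper instead exploits monotonicity of $t\mapsto s_q(t)$ and estimates along a sequence $t_n\to0^+$ chosen so that the maximizer of $a_{t_n}$ is an integer, splitting the sum as $\int_1^n+a_{t_n}(n)+\int_n^\infty$ --- a different but equivalent device. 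Either route closes the gap; as written, however, your proof establishes the lemma only for $0\le q\le\frac d2$.
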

\begin{proof}
The Hilbert-Schmidt norm of the semigroup can be computed. 
Recall that  $\{\frac {e_j}{\lambda_j^{q/2}}\}_j$ is an orthonormal basis of $H^q$. Thus
\[
\|S(t)\|_{\gamma(H^q,L^2_\sigma)}^2
=
\sum_{j=1}^\infty \|S(t) \frac {e_j}{\lambda_j^{q/2}}\|_{L^2_\sigma}^2
=
\sum_{j=1}^\infty \frac 1{\lambda_j^{q}} \|e^{-\lambda_j t} e_j\|_{L^2_\sigma}^2
=
\sum_{j=1}^\infty \frac {e^{-2\lambda_j t}}{\lambda_j^{q}}.
\]
Since $\lambda_j \sim j^{\frac 2d}$ as $j\to \infty$, we estimate
\[
\|S(t)\|_{\gamma(H^q,L^2_\sigma)}^2\le C
\sum_{j=1}^\infty \frac {e^{-2j^{\frac 2d} t}}{j^{\frac{2q}d}}.
\]
Therefore we analyse the series $s_q(t)=\displaystyle \sum_{j=1}^\infty \frac {e^{-2j^{\frac 2d} t}}{j^{\frac{2q}d}}$.
Let us consider different values of the parameter $q$.
\\$\bullet$ 
When $q=\frac d2$ 
 the series  becomes 
\[
s_{\frac d2}(t)=\sum_{j=1}^\infty j^{-1} e^{-2j^{\frac 2d} t}
=
e^{-2t}+\sum_{j=2}^\infty j^{-1} e^{-2j^{\frac 2d} t}
\le
e^{-2t}+\int_1^\infty \frac 1x e^{-2x^{\frac 2d} t} dx.
\]
The integral
is computed by means of the change of variable $x=y^d t^{-\frac d2}$ so to get
\[
\int_1^\infty \frac 1x e^{-2x^{\frac 2d} t} \text{d}x=\int_{\sqrt t}^\infty \frac dy e^{-2y^2}\text{d}y .
\]
Hence, for $t \in (0,1)$  we get
\[
s_{\frac d2}(t)\le e^{-2t}+d \int_{\sqrt t}^1 \frac 1y \text{d}y
+\int_1^\infty e^{-2y^2}\text{d}y
\le 1-\frac d2 \ln t+C.
\]
$\bullet$ 
When $0\le q < \frac d2$ then  the sequence of the addends
is monotone decreasing and 
therefore we estimate the series by an integral:
\[
\sum_{j=1}^\infty \frac {e^{-2j^{\frac 2d} t}}{j^{\frac{2q}d}}
\le
\int_0^\infty \frac {e^{-2x^{\frac 2d} t}}{x^{\frac{2q}d}}dx .
\]
Again, by the change of variable $x=y^d t^{-\frac d2}$ we calculate the integral and get
\[
\sum_{j=1}^\infty \frac {e^{-2j^{\frac 2d} t}}{j^{\frac{2q}d}}\le 
t^{q-\frac d2} d\int_0^\infty y^{d-2q-1} e^{-2y^2}\text{d}y .
\]
The latter integral is convergent since $d-2q-1>-1$ by the assumption that $q<\frac d2$. 
Hence we get the bound \eqref{qd2} for the Hilbert-Schmidt norm of $S(t)$.
\\$\bullet$ 
When $q<0$  the sequence of the addends in the series $s_q(t)$ 
is first increasing and then decreasing. 
 Let us notice that $t\mapsto s_q(t)$ (defined for $t>0$) 
is a continuous decreasing  positive function converging to $0$ as $t \to +\infty$.
Hence to estimate it for $t\to 0^+$ it is enough to get an estimate over a sequence $t_n\to 0^+$. 
We choose this sequence in such a way that the maximal value of the function 
$a_t(x):= x^{-\frac {2q}d} e^{-2x^{\frac 2d} t}$ (defined for $x>0$) is attained at the 
integer value $n=(-\frac q{2t_n})^{\frac d2} \in \mathbb N$. 
In this way we can estimate the series by means of an integral:
\[\begin{split}
s_q(t_n)\equiv \sum_{j=1}^\infty a_{t_n}(j)&\le
\int_1^n a_{t_n}(x)\text{d}x+ a_{t_n}(n)+ \int_n^\infty a_{t_n}(x)\text{d}x
\\&
= \int_1^\infty x^{-\frac{ 2q}d} e^{-2x^{\frac 2d} t_n} \text{d}x+n^{-\frac {2q}d} e^{-2 n^{\frac 2d} t_n}
\\&
\le d\Big(\int_0^\infty y^{d-1-2q}e^{-2y^2}\text{d}y\Big)\ t_n ^{q-\frac d2}
+C_q t_n^q
\end{split}\]
where we have computed the integral by means of the change of variable $x=y^d t_n^{-\frac d2}$ as before.
Hence,  we get that 
\[
s_q(t_n)\le \tilde C t_n^{q-\frac d2} \;\text{ for any }n
\]
and therefore for $t \to 0^+$
\[
s_q(t)\le\frac C{t^{\frac d2-q}}.
\]
This proves \eqref{qd2} when $q<0$.
\end{proof}

\subsection{The bilinear term}
Let us define the nonlinear term  by $B(u,v)=-P[(u\cdot \nabla)v]$. 
Following \cite{Soh},
this is first defined on smooth  divergence free vectors fields with compact support and one proves by integration by parts that
\begin{equation}
\langle B(u,v), z\rangle=-\langle B(u,z), v\rangle, \qquad \langle B(u,v), v\rangle=0
\end{equation} 
Then one specifies that $B$ is continuous with respect to suitable topologies. In particular, H\"older inequality
provides
\[
 \|B(u,v)\|_{H^{-1}}\le \|u\|_{L^4_\sigma} \|v\|_{L^4_\sigma}
\]
and thus $B:L^4_\sigma\times L^4_\sigma \to H^{-1}$ is continuous.

Since $u$ is  a divergence free vector field, we also have the representation
$B(u,v)= -P[ \text{div}\ (u\otimes v )]$ which will be useful later on (again this holds for smooth entries and then is extended for $u$ and $v$ suitably regular).

For short we shall write $B(u)$ instead of $B(u,u)$.

\subsection{Fractional Brownian motion}

First, we recall that  a real fractional Brownian
motion  (fBm) $\{B^{{\mathcal H}}(t)\}_{t\in [0,T]}$ with Hurst  parameter ${\mathcal H}\in(0,1)$ is 
 a centered Gaussian  process with covariance function 
\begin{equation}
\label{cov}
\mathbb{E}[ B ^{{\mathcal H}}(t) B ^{{\mathcal H}}(s)] :=R_{{\mathcal H}}(t,s)= \frac{1}{2} ( t^{2{\mathcal H}} + s^{2{\mathcal H}}- \vert t-s\vert ^{2{\mathcal H}}), \hskip0.5cm s,t \in [0,T].
\end{equation}
For more details see \cite{N}.

We are interested in the infinite dimensional fractional Brownian motion.
We consider the separable Hilbert space $L^{2}_{\sigma}$ and its  orthonormal basis
$\{e_j\}_{j=1}^\infty$. Then we define
\begin{equation}\label{cfBm}
W^{{\mathcal H}}(t)=\sum_{j=1}^{\infty} e_{j} \beta_{j}^{{\mathcal H}}(t)
\end{equation}
where $\{ \beta_{j}^{{\mathcal H}} \}_j$ is a family of independent  real fBm's
defined on a complete filtered  probability space $(\Omega, \mathbb F, \{\mathbb F_t\}_t,  \mathbb P)$.
This is the so called $L^{2}_{\sigma}$-cylindrical fractional Brownian motion. 
Moreover we consider a linear 
operator $\Phi$ defined in $L^2_\sigma$. Notice that the series in
\eqref{cfBm} does not converge in $L^2_\sigma$.

We need to define the integral of the form $\int_{0}^{t} S(t-s) \Phi dW^{{\mathcal H}}(s)$, 
appearing in the definition of mild solution; we will analyze this stochastic integral 
 in Section \ref{sec-linear}.

\subsection{Abstract equation}
Applying the projection operator $P$ to \eqref{Navier}
we get rid of the pressure term; setting $\nu=1$, equation \eqref{Navier} becomes
\begin{equation}\label{eq-abst}
\begin{cases}
du(t) + Au(t)\ dt= B(u(t)) \ dt + \Phi \text{d} W^{{\mathcal H}}(t), & t>0 
\\ u(0)=u_{0}
\end{cases}
\end{equation}
We consider its mild solution.
\begin{definition}  A measurable function $u:\Omega\times [0,T]\rightarrow L_{\sigma}^{p}$
is a mild $L^p$-solution of  equation \eqref{eq-abst} if

$\bullet$ $u \in C([0,T]; L^{p}_\sigma)$, $\mathbb P$-a.s.

$\bullet$ for all $t \in (0,T]$, we have
\begin{equation}\label{mild}
u(t)= S(t)u_{0}+  \ \int_{0}^{t} S(t-s)B(u(s)) \  ds +  \int_{0}^{t} S(t-s) \Phi  \text{d}W^{{\mathcal H}}(s)
 \end{equation}

$\mathbb P$-a.s.
\end{definition}

\section{The linear equation}\label{sec-linear}
Now we  consider the linear problem associated to the Navier-Stokes equation \eqref{eq-abst}, that is 
\begin{equation}\label{eq-z}
 d z(t)+   Az(t) \ dt = \Phi  dW^{{\mathcal H}}(t)
 \end{equation}
When the      initial condition is $z(0)=0$,
its mild solution is the stochastic convolution
\begin{equation}\label{z-proc}
z(t)=   \int_{0}^{t} S(t-s) \Phi \  dW^{{\mathcal H}}(s).
\end{equation}
To analyze its regularity we appeal to the following result.
\begin{proposition}\label{pro-gen}
Let $0<{\mathcal H}<1$.
 \\
If there exist $\lambda, \alpha \ge 0$ such that 
\begin{equation}\label{SPhi}
\|S(t) \Phi \|_{\gamma(L^2_\sigma,L^2_\sigma)}\le \frac C{t^\lambda} \qquad\forall t>0
\end{equation}
and 
\begin{equation}
\lambda+\frac \alpha2<{\mathcal H}
\end{equation}
then $z$ has a version which belongs to $C([0,T];H^\alpha)$.
\end{proposition}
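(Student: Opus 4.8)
\textit{Strategy.} Since the process $z$ defined in \eqref{z-proc} is a centred Gaussian process taking values in the Hilbert space $H^\alpha=D(A^{\alpha/2})$, I would produce a continuous version through Kolmogorov's continuity criterion. The point is that each increment $z(t)-z(s)$ is a Gaussian vector in $H^\alpha$, so by the equivalence of Gaussian moments $\mathbb{E}\|z(t)-z(s)\|_{H^\alpha}^{2m}\le C_m\big(\mathbb{E}\|z(t)-z(s)\|_{H^\alpha}^2\big)^m$ for every $m\in\mathbb N$. Hence it is enough to prove one second-moment bound $\mathbb{E}\|z(t)-z(s)\|_{H^\alpha}^2\le C|t-s|^{2\beta}$ with some $\beta>0$: taking $m$ with $2m\beta>1$ then yields the modification in $C([0,T];H^\alpha)$.

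\textit{The basic second moment.} First I would compute $\mathbb{E}\|z(t)\|_{H^\alpha}^2=\mathbb{E}\|A^{\alpha/2}z(t)\|_{L^2_\sigma}^2$. Writing $W^{\mathcal H}=\sum_j e_j\beta_j^{\mathcal H}$ and using independence of the scalar fractional Brownian motions $\beta_j^{\mathcal H}$, the computation reduces to the covariance of scalar Wiener integrals; for ${\mathcal H}>\tfrac12$ this gives
\begin{equation*}
\mathbb{E}\|A^{\alpha/2}z(t)\|_{L^2_\sigma}^2={\mathcal H}(2{\mathcal H}-1)\int_0^t\!\!\int_0^t \big\langle A^{\alpha/2}S(t-s)\Phi,\,A^{\alpha/2}S(t-r)\Phi\big\rangle_{\gamma(L^2_\sigma,L^2_\sigma)}\,|s-r|^{2{\mathcal H}-2}\,ds\,dr .
\end{equation*}
The key algebraic observation is that, since $A^{\alpha/2}$ and $S(\cdot)$ are self-adjoint and commute and $S(\cdot)$ is a semigroup,
\begin{equation*}
\big\langle A^{\alpha/2}S(t-s)\Phi,\,A^{\alpha/2}S(t-r)\Phi\big\rangle_{\gamma(L^2_\sigma,L^2_\sigma)}=\big\|A^{\alpha/2}S\big(t-\tfrac{s+r}{2}\big)\Phi\big\|_{\gamma(L^2_\sigma,L^2_\sigma)}^2\ge 0 ,
\end{equation*}
so the cross term is a genuine Hilbert--Schmidt norm. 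I would then bound it by writing $A^{\alpha/2}S(\tau)=A^{\alpha/2}S(\tfrac\tau2)\,S(\tfrac\tau2)$ and using \eqref{stimaASp} together with the hypothesis \eqref{SPhi}:
\begin{equation*}
\big\|A^{\alpha/2}S(\tau)\Phi\big\|_{\gamma(L^2_\sigma,L^2_\sigma)}\le \big\|A^{\alpha/2}S(\tfrac\tau2)\big\|\,\big\|S(\tfrac\tau2)\Phi\big\|_{\gamma(L^2_\sigma,L^2_\sigma)}\le C\,\tau^{-\lambda-\frac\alpha2}.
\end{equation*}
Substituting $a=t-s,\ b=t-r$ reduces the double integral to $\int_0^t\!\int_0^t (\tfrac{a+b}{2})^{-(2\lambda+\alpha)}|a-b|^{2{\mathcal H}-2}\,da\,db$, whose convergence (across the diagonal, needing ${\mathcal H}>\tfrac12$, and at the corner, by a scaling count) is governed precisely by the exponent balance $2\lambda+\alpha+(2-2{\mathcal H})<2$, i.e. by the standing assumption $\lambda+\tfrac\alpha2<{\mathcal H}$. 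This shows $\sup_{t\le T}\mathbb{E}\|z(t)\|_{H^\alpha}^2<\infty$.

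\textit{Increments and the main obstacle.} For the time increment I would exploit the slack in $\lambda+\tfrac\alpha2<{\mathcal H}$: fix $\alpha'\in(\alpha,\,2{\mathcal H}-2\lambda)$, so the same estimate gives $\sup_t\mathbb{E}\|z(t)\|_{H^{\alpha'}}^2<\infty$. Splitting
\begin{equation*}
z(t)-z(s)=\int_0^s\!\big(S(t-u)-S(s-u)\big)\Phi\,dW^{\mathcal H}(u)+\int_s^t S(t-u)\Phi\,dW^{\mathcal H}(u),
\end{equation*}
the second term is estimated exactly as above and scales like $|t-s|^{2({\mathcal H}-\lambda-\alpha/2)}$, while in the first one writes $A^{\alpha/2}\big(S(t-s)-I\big)=\big(S(t-s)-I\big)A^{-(\alpha'-\alpha)/2}\,A^{\alpha'/2}$ and uses $\|(S(h)-I)A^{-\theta}\|\le Ch^{\theta}$ for the analytic semigroup, trading the extra regularity $H^{\alpha'}$ for a factor $|t-s|^{(\alpha'-\alpha)/2}$. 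Both pieces give a positive power of $|t-s|$, which is what Kolmogorov needs. The main obstacle is that this whole moment computation rests on the double-integral covariance formula with kernel $|s-r|^{2{\mathcal H}-2}$, which is only valid and diagonal-integrable for ${\mathcal H}>\tfrac12$; the cases ${\mathcal H}=\tfrac12$ (ordinary It\^o isometry, giving $\int_0^t\|A^{\alpha/2}S(t-u)\Phi\|_{\gamma(L^2_\sigma,L^2_\sigma)}^2\,du$) and especially ${\mathcal H}<\tfrac12$ must be handled separately. For ${\mathcal H}<\tfrac12$ I would replace the double integral by the isometry $\mathbb{E}|\int_0^t g\,d\beta^{\mathcal H}|^2=\|K_{\mathcal H}^\ast g\|_{L^2}^2$ coming from the fractional kernel representation of $\beta^{\mathcal H}$ (equivalently, reduce the variance to integrals against the bounded covariance $R_{\mathcal H}$ via a pathwise integration by parts), and verify that the same scaling in $\tau$, hence the same threshold $\lambda+\tfrac\alpha2<{\mathcal H}$, persists. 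Obtaining the scalar estimate $\sup_{t\le T}\mathbb{E}|\int_0^t e^{-\lambda_k(t-\cdot)}\,d\beta^{\mathcal H}|^2\lesssim \lambda_k^{-2{\mathcal H}}$ uniformly, and relating it to \eqref{SPhi} through $\lambda_k^{-\mu}=\tfrac1{\Gamma(\mu)}\int_0^\infty t^{\mu-1}e^{-\lambda_k t}\,dt$, is the delicate quantitative point in that regime.
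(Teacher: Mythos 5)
Your proposal takes a genuinely different route from the paper, for the simple reason that the paper does not prove Proposition \ref{pro-gen} at all: its ``proof'' is a citation, invoking Corollary 3.1 of \cite{DPDM2002} for ${\mathcal H}>\frac12$ and Theorem 11.11 of \cite{PDDM2006} for ${\mathcal H}<\frac12$ (both stated for analytic semigroups), with ${\mathcal H}=\frac12$ classical. What you do is reconstruct the analysis behind those citations, and for ${\mathcal H}\ge\frac12$ your reconstruction is sound and essentially complete: the double-integral covariance formula with kernel $|s-r|^{2{\mathcal H}-2}$ is the correct variance representation for deterministic operator-valued integrands against the cylindrical fBm; the identity $\langle A^{\alpha/2}S(a)\Phi,\,A^{\alpha/2}S(b)\Phi\rangle_{\gamma(L^2_\sigma,L^2_\sigma)}=\|A^{\alpha/2}S(\frac{a+b}{2})\Phi\|^2_{\gamma(L^2_\sigma,L^2_\sigma)}$ is valid here because the Stokes semigroup on $L^2_\sigma$ is self-adjoint and commutes with $A^{\alpha/2}$; the factorization $A^{\alpha/2}S(\tau)\Phi=\big(A^{\alpha/2}S(\tfrac\tau2)\big)\big(S(\tfrac\tau2)\Phi\big)$ combined with \eqref{stimaASp} and the hypothesis \eqref{SPhi} gives the bound $C\tau^{-\lambda-\alpha/2}$; and your scaling count correctly identifies $\lambda+\frac\alpha2<{\mathcal H}$ as exactly the corner-integrability threshold. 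Your upgrade to a continuous version --- choosing $\alpha'\in(\alpha,2{\mathcal H}-2\lambda)$, writing the first increment piece as $(S(t-s)-I)z(s)$ and trading regularity for a factor $|t-s|^{(\alpha'-\alpha)/2}$ via $\|(S(h)-I)A^{-\theta}\|\le Ch^{\theta}$, then invoking Gaussian moment equivalence and Kolmogorov --- is the standard and correct mechanism. What your self-contained route buys is transparency: one sees precisely where each hypothesis enters and why the threshold is sharp; what the paper's citation route buys is coverage of the rough regime at no extra cost.

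That rough regime is also where your text stops being a proof. For ${\mathcal H}<\frac12$ the kernel $|s-r|^{2{\mathcal H}-2}$ representation is unavailable, and you correctly propose replacing it by the transfer-operator isometry $\mathbb{E}\,\big|\int_0^t g\,d\beta^{\mathcal H}\big|^2=\|K_{\mathcal H}^{*}g\|_{L^2}^2$ --- but you then explicitly leave unverified the key quantitative step, namely that the singular integrand $\|A^{\alpha/2}S(t-\cdot)\Phi\|_{\gamma(L^2_\sigma,L^2_\sigma)}\lesssim (t-\cdot)^{-\lambda-\alpha/2}$ remains controllable under $K_{\mathcal H}^{*}$ (which for ${\mathcal H}<\frac12$ involves a fractional derivative, not merely an integral kernel) with the same threshold $\lambda+\frac\alpha2<{\mathcal H}$. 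Since the Proposition is stated for all $0<{\mathcal H}<1$, your argument as written proves it only for ${\mathcal H}\ge\frac12$; for ${\mathcal H}<\frac12$ it is a program whose missing step is precisely the content of Theorem 11.11 of \cite{PDDM2006}, which the paper cites. To finish, you should either carry out that estimate in full or, as the paper does, invoke the reference for that case.
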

\begin{proof}
This is a well known result for ${\mathcal H}=\frac 12$.
Moreover, the case ${\mathcal H}<\frac 12$  is proved in  Theorem 11.11 of \cite{PDDM2006}  and the case ${\mathcal H}>\frac 12$ in 
Corollary 3.1 of \cite{DPDM2002}, by assuming that the semigroup $\{S(t)\}_t$ is analytic.
\end{proof}

Now we use this result with $\alpha=d(\frac 12-\frac 1p)$ for $p>2$; 
by means of the Sobolev embedding $H^{d(\frac 12-\frac 1p)}(D)\subset L^p(D)$,
this provides that  $z$ has a version which belongs to $C([0,T];L^p_{\sigma})$.
 
We have our regularity result for the stochastic convolution by assuming that 
 $\Phi \in \mathcal L(L^2_\sigma;H^q)$ for some $q \in \mathbb R$, 
as e.g. when $\Phi= A^{-\frac q2}$.
\begin{proposition}\label{pro-z}
Let $0<{\mathcal H}<1$, $2<p<\infty$ and $\Phi \in \mathcal L(L^2_\sigma,H^q)$ for some $q \in \mathbb R$.
If the parameters fulfil
\begin{equation}\label{cond-dpq}
 \frac d2(1-\frac 1p)-\frac q2<{\mathcal H}
\end{equation}
then  the process $z$ given by \eqref{z-proc}  has a version which belongs to $C([0,T];H^{d(\frac 12-\frac 1p)})$.
By Sobolev embedding this version is in $C([0,T];L^p_\sigma)$ too.
\end{proposition}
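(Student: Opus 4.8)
The plan is to apply Proposition \ref{pro-gen} with the choice $\alpha = d(\frac12 - \frac1p)$, which is nonnegative since $p>2$, and to manufacture an admissible exponent $\lambda$ for \eqref{SPhi} out of the regularity assumption on $\Phi$ together with Lemma \ref{qsmall}. Once \eqref{SPhi} is in hand, the whole argument reduces to matching the abstract condition $\lambda + \frac\alpha2 < \mathcal{H}$ of Proposition \ref{pro-gen} with the hypothesis \eqref{cond-dpq}, and then upgrading the conclusion through the Sobolev embedding $H^{d(\frac12-\frac1p)}(D)\subset L^p(D)$ already recorded above.

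First I would estimate the Hilbert--Schmidt norm $\|S(t)\Phi\|_{\gamma(L^2_\sigma,L^2_\sigma)}$ by factoring the operator as $S(t)\Phi = S(t)\circ\Phi$, where $\Phi\colon L^2_\sigma\to H^q$ is bounded by assumption and $S(t)\colon H^q\to L^2_\sigma$ is Hilbert--Schmidt by Lemma \ref{qsmall}. The ideal property of the Hilbert--Schmidt class under composition with a bounded operator gives
\[
\|S(t)\Phi\|_{\gamma(L^2_\sigma,L^2_\sigma)} \le \|S(t)\|_{\gamma(H^q,L^2_\sigma)}\,\|\Phi\|_{\mathcal L(L^2_\sigma,H^q)}.
\]
In the subcritical regime $q<\frac d2$, estimate \eqref{qd2} of Lemma \ref{qsmall} then yields
\[
\|S(t)\Phi\|_{\gamma(L^2_\sigma,L^2_\sigma)} \le \frac{C}{t^{\frac d4 - \frac q2}} \qquad\forall t>0,
\]
so that \eqref{SPhi} holds with $\lambda = \frac d4 - \frac q2 \ge 0$. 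For the borderline exponent $q=\frac d2$ the logarithmic bound of Lemma \ref{qsmall} lets me take any $\lambda>0$, absorbing the logarithm.

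With these choices the compatibility condition is immediate: substituting $\alpha = d(\frac12-\frac1p)$ and $\lambda = \frac d4 - \frac q2$,
\[
\lambda + \frac\alpha2 = \frac d4 - \frac q2 + \frac d4 - \frac{d}{2p} = \frac d2\Big(1-\frac1p\Big) - \frac q2,
\]
so the requirement $\lambda + \frac\alpha2 < \mathcal{H}$ of Proposition \ref{pro-gen} is \emph{exactly} hypothesis \eqref{cond-dpq}. Proposition \ref{pro-gen} therefore delivers a version of $z$ in $C([0,T];H^{d(\frac12-\frac1p)})$, and the Sobolev embedding $H^{d(\frac12-\frac1p)}(D)\subset L^p(D)$ promotes this to $C([0,T];L^p_\sigma)$, as claimed.

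The genuinely substantive step is not the algebra but the sharp behaviour of the Hilbert--Schmidt norm as $t\to 0^+$, which is precisely the content of Lemma \ref{qsmall}: the whole proposition rests on the exponent $\frac d4-\frac q2$ there. The only bookkeeping subtlety is the supercritical range $q>\frac d2$, in which the series defining $\|S(t)\|_{\gamma(H^q,L^2_\sigma)}$ already converges at $t=0$, so the norm is bounded and one may take $\lambda=0$; this case yields at least the same regularity and I would handle it separately, since the interesting and sharp regime for \eqref{cond-dpq} is $q<\frac d2$.
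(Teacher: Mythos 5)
Your handling of the cases $q<\frac d2$ and $q=\frac d2$ reproduces the paper's argument exactly and is correct. The gap is in the regime $q>\frac d2$, which you dispose of with $\lambda=0$. With $\lambda=0$ and $\alpha=d(\frac12-\frac1p)$, Proposition \ref{pro-gen} requires $\frac\alpha2=\frac d4-\frac d{2p}<{\mathcal H}$, whereas the hypothesis you must work under is \eqref{cond-dpq}, namely $\frac d2(1-\frac1p)-\frac q2<{\mathcal H}$; for $q>\frac d2$ the latter threshold is strictly smaller (by $\frac q2-\frac d4$), so there are admissible parameters for which your argument yields nothing. Concretely, take $d=3$, $p=6$, $q=2$, ${\mathcal H}=0.3$: then $\frac d2(1-\frac1p)-\frac q2=\frac14<{\mathcal H}$, so \eqref{cond-dpq} holds, but $\frac d4-\frac d{2p}=\frac12>{\mathcal H}$, so the condition $\frac\alpha2<{\mathcal H}$ fails. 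The whole point of the proposition for large $q$ is that extra spatial smoothing of $\Phi$ buys a smaller Hurst parameter, and the bound $\|S(t)\Phi\|_{\gamma(L^2_\sigma,L^2_\sigma)}\le C$ throws that smoothing away: it is not ``at least the same regularity'', it is the same conclusion under a strictly stronger hypothesis on ${\mathcal H}$. Nor can you rescue the argument by downgrading $\Phi$ to $\mathcal L(L^2_\sigma,H^{q'})$ with $q'<\frac d2$, since the resulting thresholds $\frac d2(1-\frac1p)-\frac{q'}2$ stay above $\frac d4-\frac d{2p}$ and never reach \eqref{cond-dpq}.

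The paper closes this case with a commutation trick: factor $A^{\frac 12(q-\frac d2)}=A^{\varepsilon}A^{-\frac d4-\varepsilon}A^{\frac q2}$, so that for every $\varepsilon>0$ one has $\|S(t)A^{\frac12(q-\frac d2)}\Phi\|_{\gamma(L^2_\sigma,L^2_\sigma)}\le M t^{-\varepsilon}$ (the Hilbert--Schmidt factor being $A^{-\frac d4-\varepsilon}$, analyticity giving $\|A^\varepsilon S(t)\|_{\mathcal L(L^2_\sigma,L^2_\sigma)}\le M t^{-\varepsilon}$ by \eqref{stimaASp}). One then applies Proposition \ref{pro-gen} with $\lambda=\varepsilon$ and the \emph{reduced} exponent $\alpha=d(\frac12-\frac1p)-(q-\frac d2)\ge 0$ (valid as long as $q\le d(1-\frac1p)$); the condition $\varepsilon+\frac\alpha2<{\mathcal H}$ becomes precisely \eqref{cond-dpq} upon letting $\varepsilon$ be arbitrarily small. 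Since $S(t)$ commutes with fractional powers of $A$, this gives a continuous $H^{d(\frac12-\frac1p)-(q-\frac d2)}$-valued version of $A^{\frac12(q-\frac d2)}z$, hence a continuous $H^{d(\frac12-\frac1p)}$-valued version of $z$ itself; for $q>d(1-\frac1p)$ the conclusion holds for every ${\mathcal H}\in(0,1)$. This step is the genuinely substantive content of the proposition in the supercritical range, and your proposal is missing it.
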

\begin{proof}
According to Proposition \ref{pro-gen} we have to  estimate the Hilbert-Schmidt norm 
of the operator $S(t) \Phi$.  We recall that the product of two linear operators is  Hilbert-Schmidt if at least one 
of them is  Hilbert-Schmidt.

Bearing in mind  Lemma \ref{qsmall}, when $q<\frac d2$  
we get
\begin{equation}\label{norm-operators2}
\|S(t) \Phi \|_{\gamma(L^2_\sigma,L^2_\sigma)}\le
\|S(t)\|_{\gamma(H^q,L^2_\sigma)} \|\Phi \|_{\mathcal L(L^2_\sigma,H^q)}
\le \frac C{t^{\frac d4-\frac q2}} 
\end{equation}
and when $q=\frac d2$ we get
\begin{equation}\label{norm-operators3}
\|S(t) \Phi \|_{\gamma(L^2_\sigma,L^2_\sigma)}\le
\|S(t)\|_{\gamma(H^{\frac d2},L^2_\sigma)} \|\Phi \|_{\mathcal L(L^2_\sigma,H^{\frac d2})}
\le \frac C{t^a}
\end{equation}
for any $a>0$ (here the constant depends also on $a$).
Therefore when $q<\frac d2$   we choose $\lambda=\frac d4-\frac q2$, 
$\alpha=d(\frac 12-\frac 1p)$  and condition $\lambda+\frac \alpha 2<{\mathcal H}$ becomes 
\eqref{cond-dpq}; when  $q=\frac d2$ we choose $\lambda=a$, 
$\alpha=d(\frac 12-\frac 1p)$ and 
since $a$ is arbitrarily small we get again 
\eqref{cond-dpq}.

Otherwise, 
when  $q>\frac d2$ we have that $\Phi$ is a Hilbert-Schmidt operator in $L^2_\sigma$ 
(since $ \|\Phi \|_{\gamma(L^2_\sigma,L^2_\sigma)}\le
\|A^{-\frac q2} \|_{\gamma(L^2_\sigma,L^2_\sigma)} \|A^{\frac q2}\Phi \|_{\mathcal L(L^2_\sigma,L^2_\sigma)}$) and we estimate 
\begin{equation}\label{norm-operators}
\|S(t) \Phi \|_{\gamma(L^2_\sigma,L^2_\sigma)}\le
\|S(t)\|_{\mathcal L(L^2_\sigma,L^2_\sigma)} \|\Phi \|_{\gamma(L^2_\sigma,L^2_\sigma)}\le
C
\end{equation}
for all $t\ge 0$.
Actually we can prove something more; 
we write $A^{\frac 12(q-\frac d2)}=A^{\varepsilon}  A^{-\frac d4 -\varepsilon} A^{\frac q2}$ and 
for any $\varepsilon>0$ we have
\[\begin{split}
\|S(t) A^{\frac 12(q-\frac d2)}\Phi\|_{\gamma(L^2_\sigma,L^2_\sigma)}
&\le
\|A^\varepsilon S(t)\|_{\mathcal L(L^2_\sigma,L^2_\sigma)}
\|A^{-\frac d4 -\varepsilon}\|_{\gamma(L^2_\sigma,L^2_\sigma)}
\|A^{\frac q2}\|_{\mathcal L(H^q,L^2_\sigma)} \|\Phi\|_{\mathcal L(L^2_\sigma;H^q)}
\\&
\le \frac M{t^\varepsilon}
\end{split}
\]
According to Proposition \ref{pro-gen}, choosing 
$\gamma=\varepsilon$ and $\alpha=d(\frac 12-\frac 1p)-(q-\frac d2)$
we obtain that the process
\[
\int_0^t  S(t-s) A^{\frac 12(q-\frac d2)} \Phi \  dW^{{\mathcal H}}(s), \quad t \in [0,T]
\]
has a $C([0,T];H^{d(\frac 12-\frac 1p)-(q-\frac d2)})$-valued version 
if
\[
\varepsilon+\frac 12[ d(\frac 12-\frac 1p)-(q-\frac d2)] <{\mathcal H}<1
\]
i.e. choosing $\varepsilon$ very small, if
\[
\frac d2 (1-\frac 1p)-\frac q2<{\mathcal H}<1.
\]
Since $S(t)$ and $A^{\frac 12(q-\frac d2)}$ commute, we get as usual that the result holds for the process
$A^{\frac 12(q-\frac d2)}z$. Therefore $z$ has a $C([0,T];H^{d(\frac 12-\frac 1p)})$-version.
Actually this holds when $\alpha=d(\frac 12-\frac 1p)-(q-\frac d2)\ge 0$, that is when
$q\le d(1-\frac 1p)$. For larger values of $q$ the regularising effect of the operator $\Phi$ is even better and the result holds true for any $0<{\mathcal H}<1$.
\end{proof}

\begin{remark} 
Instead of appealing to the Sobolev embedding $H^{d(\frac 12-\frac 1p)}\subset L^p_\sigma$, 
we could look directly for an $L^p$-mild solution $z$, that is a process with $\mathbb P$-a.e. path in 
 $C([0,T];L^p_\sigma)$. 
 Let us check if this approach would be better.
 
 There are results providing the regularity in Banach spaces; see e.g.  
 Corollary 4.4. in the paper  \cite{Co} by   \v{C}oupek,  Maslowski,
 and Ondrej\'at. They involve the 
$\gamma$-radonifying norm instead of the Hilbert-Schmidt norm (see,
 e.g., \cite{vN} for the definition of these norms).
However 
 the estimate of the $\gamma$-radonifying norm of $S(t) \Phi $ is not trivial. 
The estimates involved lead anyway to work in a Hilbert space
setting. Let us provide some details about this fact.

 According to \cite{Co},  assuming  $\frac 12 <{\mathcal H}<1$  and $1\le p{\mathcal H}<\infty$ one should verify that  
 there exists $\lambda \in [0,{\mathcal H})$ such that 
\[
\|S(t) \Phi \|_{\gamma(L^2_\sigma,L^p_\sigma)}\le \frac C{t^\lambda} \qquad\forall t>0
\]
\\
Given $\Phi \in \mathcal L(L^2;H^q)$
 we just have to estimate  the $\gamma(H^q,L^p_\sigma)$-norm of $S(t)$, since
\[
\|S(t) \Phi \|_{\gamma(L^2_\sigma,L^p_\sigma)}\le
\|S(t)\|_{\gamma(H^q,L^p_\sigma)}  \|\Phi \|_{\mathcal L(L^2_\sigma,H^q)}.
\]
The $\gamma(H^q,L^p_\sigma)$-norm of $S(t)$ is equivalent
to 
\[
\left[\int_D \Big(\sum_{j=1}^\infty 
|S(t) \frac {e_j(x)}{\lambda_j^{q/2}}|^2\Big)^{\frac p2}dx \right]^{1/p}
\]
since $\{\frac {e_j}{\lambda_j^{q/2}}\}_j$ is an orthonormal basis of $H^q$.

Therefore, we estimate the integral. Let us do it for $p \in 2\mathbb N$.
We have
\[\begin{split}
\int_D \Big(\sum_{j=1}^\infty 
|S(t) \frac {e_j(x)}{\lambda_j^{q/2}}|^2\Big)^{\frac p2}dx
&=
\int_D \Big(\sum_{j=1}^\infty \lambda_j^{-q} e^{-2\lambda_j t}
|e_j(x)|^2\Big)^{\frac p2}dx
\\&=  
\int_D \Pi_{n=1}^{p/2}(\sum_{j_n=1}^\infty \lambda_{j_n}^{-q} e^{-2\lambda_{j_n} t}
|e_{j_n}(x)|^2) dx
\end{split}\]
Using the H\"older inequality, we get 
\[
\int_D |e_{j_1}(x)|^2 |e_{j_2}(x)|^2 \cdots |e_{j_{p/2}}(x)|^2 dx
\le 
\|e_{j_1}\|_{L^p}^2 \|e_{j_2}\|_{L^p}^2 \cdots \|e_{j_{p/2}}\|_{L^p}^2 
\]
Hence
\[
\int_D \Big(\sum_{j=1}^\infty 
|S(t) \frac {e_j(x)}{\lambda_j^{q/2}}|^2\Big)^{\frac p2}dx
\le
\left(\sum_{j=1}^\infty   \lambda_j^{-q} e^{-2\lambda_j t} \|e_{j}\|_{L^p}^2\right)^{p/2}
\]
How to estimate $ \|e_{j}\|_{L^p}$? 
Again using the Sobolev embedding $H^{d(\frac 12-\frac 1p)}\subset
L^p$. Actually we are back again to Hilbert spaces and we obtain
nothing different with respect to our procedure which started in the Hilbert
spaces since the beginning. We leave the details to the reader.

Finally, let us point out that for $0<{\mathcal H}<\frac 12$, an
$L^p$-mild solution $z$ can be 
obtained in the Banach setting by means of Theorem 5.5 in \cite{BvNS}; this requires 
the operator $\Phi$ to be a $\gamma$-radonifying operator from 
$L^2_\sigma$ to $L^p_\sigma$, which 
is a quite strong assumption. Our method exploits the properties of the semigroup 
$S(t)$ so to allow weaker assumptions on the operator $\Phi$.
 \end{remark}

\section{Existence and uniqueness results}\label{sec-ex}
 In this section we study the Navier-Stokes initial problem \eqref{eq-abst} in the space
$L_{\sigma}^{p}$. 
We prove first the local existence result and  then the pathwise uniqueness.

\subsection{Local existence}
Following \cite{F}, we set $v=u-z$, where $z$ is the mild solution of the linear equation
\eqref{eq-z}. Therefore
\begin{equation}\label{eq-v}
\begin{cases}
\dfrac{dv}{dt}(t)+Av(t)=B(v(t)+z(t)) ,&\quad t>0\\
v(0)=u_0
\end{cases}
\end{equation}
and we get an existence result for $u$  by looking for an existence result for $v$.
This is given in the following theorem.

\begin{theorem} \label{th-ex}
Let $0<{\mathcal H}<1$, $d<p<\infty$ and $\Phi \in \mathcal L(L^2_\sigma,H^q)$ for some $q \in \mathbb R$.\\
Given $u_{0}\in L_{\sigma}^{p}$,
if the parameters fulfil
\begin{equation}\label{cond-dpq2}
 \frac d2(1-\frac 1p)-\frac q2<{\mathcal H}
\end{equation}
then there exists a  local  mild  $L^p$-solution to equation \eqref{eq-abst}.
\end{theorem}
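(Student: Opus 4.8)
The plan is to freeze the stochastic part and solve the random evolution equation \eqref{eq-v} for $v=u-z$ pathwise, by a deterministic Banach fixed-point argument. Under the standing assumption \eqref{cond-dpq2}, Proposition \ref{pro-z} gives a version of $z$ with $\mathbb P$-a.e. path in $C([0,T];L^p_\sigma)$, so I fix such a path and regard $z$ as a given element of the Banach space $X_T:=C([0,T];L^p_\sigma)$, with norm $\|\cdot\|_{X_T}$. On $X_T$ I define
\[
(\mathcal T v)(t)= S(t)u_0+\int_0^t S(t-s)\,B(v(s)+z(s))\,ds ,
\]
so that a fixed point of $\mathcal T$ is precisely a mild solution of \eqref{eq-v}, whence $u=v+z$ is the desired local mild $L^p$-solution of \eqref{eq-abst}.

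The crucial step is the estimate of the bilinear convolution. Using the divergence form $B(w)=-P[\,\mathrm{div}(w\otimes w)]$, I transfer the spatial derivative onto the semigroup: writing $P\,\mathrm{div}=A^{1/2}\,(A^{-1/2}P\,\mathrm{div})$, with $A^{-1/2}P\,\mathrm{div}$ bounded on $L^{p/2}$ (a standard property of the Stokes operator, cf. \cite{Soh,Giga}), and splitting $S(t-s)=S(\tfrac{t-s}{2})S(\tfrac{t-s}{2})$, I apply the smoothing \eqref{stimaSpr} from $L^{p/2}_\sigma$ to $L^{p}_\sigma$ and then \eqref{stimaASp} with $\alpha=\tfrac12$ on $L^p_\sigma$. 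Since $w\otimes w\in L^{p/2}$ with $\|w\otimes w\|_{L^{p/2}}\le\|w\|_{L^p_\sigma}^2$, this is expected to yield
\[
\|S(t-s)\,B(w(s))\|_{L^p_\sigma}\le \frac{M}{(t-s)^{\frac12+\frac{d}{2p}}}\,\|w(s)\|_{L^p_\sigma}^2 .
\]
Here the hypothesis $p>d$ is decisive: it makes the exponent $\tfrac12+\tfrac{d}{2p}$ strictly less than $1$, so that the kernel is integrable and $\int_0^t (t-s)^{-(\frac12+\frac{d}{2p})}\,ds\le C\,t^{\frac12-\frac{d}{2p}}$.

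With this estimate and the bound $\|S(t)u_0\|_{L^p_\sigma}\le M\|u_0\|_{L^p_\sigma}$, writing $Z=\|z\|_{X_T}$ and $w=v+z$ I obtain
\[
\|\mathcal T v\|_{X_T}\le M\|u_0\|_{L^p_\sigma}+C\,T^{\frac12-\frac{d}{2p}}\,(\|v\|_{X_T}+Z)^2 .
\]
Choosing $R=2M\|u_0\|_{L^p_\sigma}$ and then $T$ so small that $C\,T^{\frac12-\frac{d}{2p}}(R+Z)^2\le R/2$ makes $\mathcal T$ map the ball $B_R=\{v\in X_T:\|v\|_{X_T}\le R\}$ into itself. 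For the contraction I use bilinearity,
\[
B(v_1+z)-B(v_2+z)=B(v_1-v_2,\,v_1+z)+B(v_2+z,\,v_1-v_2),
\]
and the same kernel estimate gives $\|\mathcal T v_1-\mathcal T v_2\|_{X_T}\le C\,T^{\frac12-\frac{d}{2p}}(R+Z)\,\|v_1-v_2\|_{X_T}$, a strict contraction after possibly shrinking $T$ further. Banach's fixed point theorem then produces a unique $v\in B_R$, and $u=v+z$ is the claimed local solution.

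The routine but genuinely necessary point still to be verified is that $\mathcal T$ lands in $C([0,T];L^p_\sigma)$ rather than merely in bounded functions: continuity of $t\mapsto S(t)u_0$ is the $C_0$ property, while continuity of the Duhamel term follows from integrability of the kernel together with strong continuity of $S$ (the term even vanishes like $t^{\frac12-\frac{d}{2p}}$ as $t\to 0^+$). I expect the \emph{main obstacle} to be the bilinear estimate itself, in particular the bookkeeping that moves the derivative in $B$ onto the semigroup and pins down the exponent $\tfrac12+\tfrac{d}{2p}$, since it is exactly the requirement that this exponent be smaller than $1$ that forces the restriction $p>d$.
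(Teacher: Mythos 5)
Your proposal is correct and follows essentially the same route as the paper: the shift $v=u-z$ with $z$ handled by Proposition \ref{pro-z}, the divergence form $B(w)=-P[\mathrm{div}(w\otimes w)]$ with $A^{-1/2}P\,\mathrm{div}$ bounded on $L^{p/2}_\sigma$ combined with \eqref{stimaSpr}--\eqref{stimaASp} to get the kernel $(t-s)^{-(\frac12+\frac d{2p})}$, integrable precisely when $p>d$, and a pathwise small-time contraction in $C([0,T];L^p_\sigma)$. The only cosmetic difference is that you invoke Banach's fixed point theorem on a ball of radius $2M\|u_0\|_{L^p_\sigma}$, while the paper runs the Picard iteration explicitly with the uniform bound $2K_0$, $K_0=\max(\|u_0\|_{L^p_\sigma},\sup_t\|z(t)\|_{L^p_\sigma})$, and a random stopping time \eqref{cond-T} --- the same argument in different clothing.
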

\begin{proof} 
From Proposition \ref{pro-z} we know that $z$ has a version which belongs to $C([0,T];L^p_\sigma)$.

Now we observe that to find  a mild solution \eqref{mild} to equation \eqref{eq-abst} 
is equivalent to find a mild solution 
\[
v(t)= S(t)u_{0} +  \ \int_{0}^{t} S(t-s)B(v(s) +z(s))ds 
\]
to equation \eqref{eq-v}.

We work pathwise and  define a sequence by iterations: first $v^{0}=u_{0}$ and inductively
\[
v^{j+1}(t)= S(t)u_{0}+  \ \int_{0}^{t} S(t-s)B(z(s)+ v^{j}(s))  \ ds , \quad t \in [0,T]
\]
for $j=0,1,2,\ldots$.

Let us denote by $K_{0}$ the random constant 
\[ 
K_0=\max\left(\| u_{0}\|_{L^p_\sigma}, \sup_{t\in[0,T]} \|z(t) \|_{L^p_\sigma} \right).
\]
We shall show that there exists a random time $\tau>0$
such that  $\displaystyle\sup_{t\in [0,\tau]}\|v^{j}(t)\|_{L^p_\sigma}\le 2K_{0}$ 
for all $j\ge 1$. We have 
\[
 \|v^{j+1}(t)\|_{L^p_\sigma} \le 
\|S(t)u_{0}\|_{L^p_\sigma}  + \int_{0}^{t} \|S(t-s)B(v^{j}(s)+z(s))\|_{L^p_\sigma}  \ ds
\]
We observe that from \eqref{stimaSpr} and \eqref{stimaASp} we get
	\begin{equation}\label{one}
	\|S(t)u_{0}\|_{L^p_\sigma} \le  \|u_{0}\|_{L^p_\sigma}  
	\end{equation}
and 
\begin{equation}\label{two}
\begin{split}
\int_0^t &\|S(t-s)B((v^{j}(s)+z(s))\|_{L^p_\sigma} ds
\\&\le
 \int_{0}^{t}  \| A^{\frac{1}{2}} S(t-s) A^{-\frac{1}{2}} P \text{ div } ((v^{j}(s)+z(s))\otimes (v^{j}(s)+z(s))) \|_{L^p_\sigma}  \ ds, 
\\&	\le    
\int_{0}^{t}  \frac{1}{(t-s)^{\frac 12}}  \ \|S(t-s) A^{-\frac{1}{2}} P \text{ div }((v^{j}(s)+z(s))\otimes (v^{j}(s)+z(s)))\|_{L^p_\sigma}\ ds
\\&	\le
\int_{0}^{t}\frac{M}{(t-s)^{\frac 12 + \frac{d}{2p}}}\ \|A^{-\frac{1}{2}} P \text{ div } ((v^{j}(s)+z(s))\otimes (v^{j}(s)+z(s)))\|_{L^{p/2}_\sigma} \ ds
\\&	\le    
\int_{0}^{t}  \frac{M}{(t-s)^{\frac 12 + \frac{d}{2p}}} \ \| (v^{j}(s)+z(s))\otimes (v^{j}(s)+z(s)) \|_{L^{p/2}_\sigma}   \ ds
\\&	\le  
\int_{0}^{t}  \frac{M}{(t-s)^{\frac 12 + \frac{d}{2p}}} \  \|v^{j}(s)+z(s)\|_{L^p_\sigma}^{2}  \ ds
\end{split}
\end{equation}
From  (\ref{one}) and  (\ref{two})  we deduce that
\[
\begin{split}
\|v^{j+1}(t)\|_{L^p_\sigma} 
&\le  K_{0} +\int_{0}^{t} \frac{M}{(t-s)^{\frac 12+ \frac d{2p} }}\ \|v^{j}(s)+z(s)\|_{L^p_\sigma}^{2}  \ ds
\\&\le
 K_{0} +\int_{0}^{t} \frac{2M}{(t-s)^{\frac 12+ \frac d{2p} }}\ \|z(s)\|_{L^p_\sigma}^{2}  \ ds 
+\int_{0}^{t} \frac{2M}{(t-s)^{\frac 12+ \frac d{2p} }}\ \|v^{j}(s)\|_{L^p_\sigma}^{2}  \ ds
\end{split}
\]
Thus, when $\frac 12 +\frac d{2p}<1$ (i.e. $p>d$) we get
\[\begin{split}
 \sup_{t\in [0,T]}\|v^{j+1}(t)\|_{L^p_\sigma}   
&\le   K_{0}  + 2 M\   \frac{T^{\frac 12- \frac d{2p}}}{\frac 12- \frac d{2p}}   \  
\sup_{t\in [0,T]} \|z(t)\|_{L^p_\sigma}^{2} 
+  2 M\  \frac{T^{\frac 12- \frac d{2p}}}{\frac12- \frac d{2p}} \  (\sup_{t\in [0,T]}\|v^{j}(t)\|_{L^p_\sigma})^{2} 
\\
&\le K_{0}  +\frac{4pM}{p-d} T^{\frac 12- \frac d{2p}}   K_0^2
+  \frac{4pM}{p-d} \  T^{\frac 12- \frac d{2p}}\  (\sup_{t\in [0,T]}\|v^{j}(t)\|_{L^p_\sigma})^{2} 
\end{split}
\]

Now we show that if $\displaystyle\sup_{t\in [0,T]}\|v^{j}(t)\|_{L^p_\sigma}\le 2K_0$, then 
$\displaystyle\sup_{t\in [0,T]}\|v^{j+1}(t)\|_{L^p_\sigma}\le 2K_0$ on a suitable time interval.
Indeed, from the latter relationship we get
\[\begin{split}
\sup_{t\in [0,T]}\|v^{j+1}(t)\|_{L^p_\sigma}  
&\le K_0+ \frac{4pM}{p-d} T^{\frac 12- \frac d{2p}}K_0^2  
               +\frac{4pM}{p-d} T^{\frac 12- \frac d{2p}} 4K_0^2
\\&=2K_0\left(\frac 12 +\frac 12 \frac{20pM}{p-d} T^{\frac 12- \frac d{2p}} K_0\right).
\end{split}\]
Hence, when $T$ is such that 
\begin{equation*}
\frac{20pM}{p-d} T^{\frac 12- \frac d{2p}} K_0 \le 1
\end{equation*}
we obtain the required bound. 
Therefore we define the stopping time
\begin{equation}\label{cond-T}
\tau = \min\Big\{T,\left(\frac{p-d}{20pMK_0}  \right)^{\frac {2p}{p-d}}\Big\} 
\end{equation}
so that
\begin{equation}\label{cond-tau}
\frac{20pM}{p-d} \tau^{\frac 12- \frac d{2p}} K_0 \le 1
\end{equation}
and obtain that
\begin{equation}\label{stima-unif}
\sup_{t\in [0,\tau]}\|v^{j}(t)\|_{L^p_\sigma}\le 2K_0 \qquad \forall j .
\end{equation}

Now, we shall show the convergence of the sequence $v^{j}$.  
First, notice that 
\begin{multline*}
B(v^{j+1}+z)-B(v^j+z)
\\=-P\text{div}\ \big((v^{j+1}-v^j)\otimes v^{j+1}+v^j\otimes (v^{j+1}-v^j)
+(v^{j+1}-v^j)\otimes z+z\otimes (v^{j+1}-v^j)\big).
\end{multline*}
We proceed as in \eqref{two} and get
\[
\begin{split}
 \|v^{j+2}(t)-&v^{j+1}(t)\|_{L^p_\sigma} 
\\&\le
 \int_0^t \|S(t-s)\big(B(v^{j+1}(s)+z(s))-B(v^{j}(s)+z(s))\big)\|_{L^p_\sigma} ds
\\&\le 
\int_{0}^{t} \frac{M}{(t-s)^{\frac 12+\frac d{2p}}}  
 \big( \| v^{j}(s)\|_{L^p_\sigma} + \| v^{j+1}(s)\|_{L^p_\sigma}+2\|z(s)\|_{L^p_\sigma}  \big) \ 
\|v^{j+1}(s)-v^{j}(s) \|_{L^p_\sigma} ds 
\end{split}\]
Hence, using \eqref{stima-unif}   we get 
\[\begin{split}
\sup_{t \in [0,\tau]}\|v^{j+2}(t)-v^{j+1}(t)\|_{L^p_\sigma} 
&\le 
\int_{0}^{\tau} \frac{ M 6K_0}{(t-s)^{\frac 12+\frac d{2p}}}  ds \ 
\left(\sup_{s \in [0,\tau]} \|v^{j+1}(s)-v^{j}(s) \|_{L^p_\sigma} \right)
\\
&=  \frac{12pM  K_{0}}{p-d} \  \tau ^{\frac 12- \frac d{2p}}  \  \left( \sup_{t\in [0,\tau]}\|v^{j+1}(t)-v^{j}(t) \|_{L^p_\sigma} \right)
\end{split}\]
Setting $C_0= \frac{12pM  K_{0}}{p-d} \  \tau ^{\frac 12- \frac d{2p}}$, 
from \eqref{cond-T}-\eqref{cond-tau} we obtain that  $C_0<1$. 
Moreover
\[\begin{split}
\sup_{t \in [0,\tau]}\|v^{j+2}(t)-v^{j+1}(t)\|_{L^p_\sigma} 
&\le C_0 \sup_{t\in [0,\tau]}\|v^{j+1}(t)-v^{j}(t) \|_{L^p_\sigma} 
\\&
\le
C_0^{j+1} \sup_{t\in [0,\tau]}\|v^{1}(t)-v^{0}(t) \|_{L^p_\sigma} 
\end{split}\]
Therefore $\{v^j\}_j$ is a Cauchy sequence; hence it converges, that is 
there exists $v \in C([0,\tau]; L^{p}_\sigma)$ such that 
 $v^{j}\rightarrow v$ in $C([0,\tau]; L^{p}_\sigma)$.  
This proves the existence of  a unique local mild $L^p$-solution $v$ for equation 
 \eqref{eq-v}.

Since $u=v+z$, we have got a local mild $L^p$-solution $u$ for equation \eqref{eq-abst}. 
\end{proof}

\begin{remark}
We briefly discuss the case of 
 cylindrical noise, i.e. $\Phi= Id$. Bearing in mind Theorem \ref{th-ex}, the
parameters fulfil
\begin{equation}\label{q0}
\frac d2(1-\frac 1p)<{\mathcal H}<1.
\end{equation}
When $2=d<p$, this means that $p$ and ${\mathcal H}$ must be chosen in such a way that 
\begin{equation}
1-\frac 1p<{\mathcal H}<1
\end{equation}
This means that ${\mathcal H}$ must be at least larger than $\frac 12$.
On the other hand,  when $3=d<p$ we cannot apply our procedure, since  $\frac d2(1-\frac 1p)>1$  and therefore 
 the set of conditions \eqref{q0} is void.
\end{remark}

\subsection{Uniqueness}
Now we show pathwise uniqueness of the solution given in Theorem \ref{th-ex}. 
\begin{theorem} 
Let $d<p<\infty$ and $\Phi \in \mathcal L(L^2_\sigma,H^q)$ for some $q \in \mathbb R$.\\
Given $u_{0}\in L_{\sigma}^{p}$,
if
\[
\frac d2 (1-\frac 1p)-\frac q2<{\mathcal H}<1
\]
then the   local  mild  $L^p$-solution  to equation \eqref{eq-abst} given in Theorem \ref{th-ex} is pathwise unique.
\end{theorem}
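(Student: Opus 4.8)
The plan is to argue pathwise and reduce the question to a scalar weakly singular Gronwall inequality. Let $u_1$ and $u_2$ be two local mild $L^p$-solutions issuing from the same initial datum $u_0$, and let $[0,\tau]$ be a common (random) existence interval, obtained by taking the minimum of the two local existence times; on this interval both paths lie in $C([0,\tau];L^p_\sigma)$ for $\mathbb P$-a.e.\ $\omega$. Since the stochastic convolution $z$ of \eqref{z-proc} depends only on the noise and not on the unknown, it is \emph{the same} in the mild formulation \eqref{mild} of both solutions. Subtracting the two identities, the term $S(t)u_0$ and the convolution $z(t)$ cancel, and, fixing $\omega$, the difference $w:=u_1-u_2$ solves the deterministic integral equation
\[
w(t)=\int_0^t S(t-s)\big(B(u_1(s))-B(u_2(s))\big)\,ds,\qquad t\in[0,\tau].
\]

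Next I would exploit the bilinearity of $B$. Using $B(u)=-P[\mathrm{div}(u\otimes u)]$ one has the algebraic identity $B(u_1)-B(u_2)=-P\,\mathrm{div}\big(w\otimes u_1+u_2\otimes w\big)$, so I can repeat \emph{verbatim} the chain of estimates in \eqref{two}, based on the factorization through $A^{1/2}S(t-s)A^{-1/2}$ and on the smoothing bounds \eqref{stimaSpr}--\eqref{stimaASp}, together with the H\"older bound $\|w\otimes u_1+u_2\otimes w\|_{L^{p/2}_\sigma}\le(\|u_1\|_{L^p_\sigma}+\|u_2\|_{L^p_\sigma})\,\|w\|_{L^p_\sigma}$. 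This produces
\[
\|w(t)\|_{L^p_\sigma}\le\int_0^t\frac{M}{(t-s)^{\frac12+\frac d{2p}}}\big(\|u_1(s)\|_{L^p_\sigma}+\|u_2(s)\|_{L^p_\sigma}\big)\,\|w(s)\|_{L^p_\sigma}\,ds.
\]
Because $u_1,u_2\in C([0,\tau];L^p_\sigma)$, the factor $\|u_1(s)\|_{L^p_\sigma}+\|u_2(s)\|_{L^p_\sigma}$ is bounded on $[0,\tau]$ by a finite random constant $R=R(\omega)$, whence, setting $\phi(t):=\|w(t)\|_{L^p_\sigma}$ and $\beta:=\tfrac12+\tfrac d{2p}$, I obtain the scalar inequality
\[
\phi(t)\le C\int_0^t (t-s)^{-\beta}\,\phi(s)\,ds,\qquad C=MR .
\]

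Finally I would invoke the generalized (weakly singular) Gronwall--Henry lemma: since the hypothesis $p>d$ gives exactly $\beta=\tfrac12+\tfrac d{2p}<1$, the kernel $(t-s)^{-\beta}$ is integrable, and iterating the integral operator yields iterated kernels controlled by Beta functions summing to a finite exponential-type majorant; for a bounded nonnegative $\phi$ the only solution of the above inequality is $\phi\equiv 0$. Hence $w\equiv 0$ on $[0,\tau]$, i.e.\ $u_1=u_2$, giving pathwise uniqueness. The main obstacle, and the reason the elementary Gronwall lemma does not apply, is precisely the singularity of the kernel at $s=t$: one must use the weakly singular version, whose applicability threshold $\beta<1$ coincides with the standing assumption $p>d$ that already governs the existence proof. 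The only additional bookkeeping is to ensure that the comparison is carried out on a genuinely common stopping-time interval and that the random constant $R(\omega)$ is finite $\mathbb P$-a.s., both of which follow from the a.s.\ continuity of the $L^p_\sigma$-valued paths.
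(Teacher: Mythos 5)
Your proposal is correct, and up to the concluding step it coincides with the paper's argument: the same pathwise reduction (the initial term $S(t)u_0$ and the stochastic convolution cancel in the difference of the two mild formulations), the same bilinear splitting $B(u_1)-B(u_2)=B(w,u_1)+B(u_2,w)$ with $w=u_1-u_2$, and the same singular-kernel estimate through the factorization $A^{1/2}S(t-s)A^{-1/2}P\,\mathrm{div}$ together with \eqref{stimaSpr}--\eqref{stimaASp}, producing the kernel $(t-s)^{-\frac12-\frac{d}{2p}}$ whose exponent is below $1$ exactly when $p>d$. Where you genuinely diverge is the closing lemma. The paper does not invoke the Gronwall--Henry inequality: it takes the supremum over the specific interval $[0,\tau]$, with $\tau$ the stopping time \eqref{cond-T} constructed in the existence proof, bounds $\|u(s)\|_{L^p_\sigma}+\|\tilde u(s)\|_{L^p_\sigma}$ by a multiple of the random constant $K_0$ (the a priori bound built into the iteration scheme), and then uses the smallness condition \eqref{cond-tau} to arrive at $\sup_{[0,\tau]}\|u-\tilde u\|_{L^p_\sigma}\le \frac25 \sup_{[0,\tau]}\|u-\tilde u\|_{L^p_\sigma}$, a contraction factor strictly below $1$, which forces the difference to vanish by elementary arithmetic. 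Your weakly singular Gronwall route buys more robustness: it requires no smallness of the time interval and no quantitative a priori bound, only finiteness of $R(\omega)=\sup_{[0,\tau]}(\|u_1\|_{L^p_\sigma}+\|u_2\|_{L^p_\sigma})$, which follows from path continuity alone; consequently it yields uniqueness among \emph{all} continuous $L^p_\sigma$-valued mild solutions on any common random interval, which is slightly stronger than the paper's statement (uniqueness of the solution constructed in Theorem \ref{th-ex} on its own interval $[0,\tau]$, within the class obeying the $K_0$-bound). The price is appealing to Henry's lemma (or carrying out the iterated-kernel/Beta-function argument), where the paper gets by with the contraction constant it already has; one could also recover your generality from the paper's method by iterating the contraction over sufficiently small subintervals.
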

\begin{proof}
Let $u$ and $\tilde u$ be two mild solutions of equation \eqref{eq-abst} with the same fBm  and the same initial velocity. Their difference satisfies an equation where the noise has disappeared. Hence we work pathwise.
We get
\[
 u(t)-\tilde u(t)= \ \int_{0}^{t} S(t-s) \big(B(u(s))-B(\tilde u(s)) \big)  \ ds .  
\]
 
Writing $B(u)-B(\tilde u)=B(u-\tilde u,u)+B(\tilde u, u-\tilde u)$,
by classical estimations as before we have 

\[
\begin{split}
\| u(t)-\tilde u(t)\|_{L^p_\sigma}
&\le \ \int_{0}^{t} \|S(t-s) \big(B(u(s))-B(\tilde u(s)) \big)\|_{L^p_\sigma}  \ ds 
\\&
\le  \int_{0}^{t} \frac{M}{(t-s)^{\frac{1}{2}+ \frac{d}{2p}}} (\| u(s)\|_{L^p_\sigma}+ \| \tilde u(s)\|_{L^p_\sigma})  \|u(s)-\tilde u(s)\|_{L^p_\sigma}  \ ds   
\end{split}
\]
Thus 
\[
\sup_{[0,\tau]}\|u(t)-\tilde u(t)\|_{L^p_\sigma}
\le  
4K_0 M  \frac{ \tau^{\frac{1}{2}- \frac{d}{2p}}}{\frac{1}{2}- \frac{d}{2p}} \  
\sup_{t\in [0,\tau]}\|u(t)-\tilde u(t)\|_{L^p_\sigma}  .
\]
Keeping in mind the definition \eqref{cond-T} of $\tau$ and \eqref{cond-tau} we get
\[
\sup_{[0,\tau]}\|u(t)-\tilde u(t)\|_{L^p_\sigma}
\le  
\frac 25
\sup_{[0,\tau]}\|u(t)-\tilde u(t)\|_{L^p_\sigma}
\]
which implies   $u(t)=\tilde u(t)$ for any $t \in [0,\tau]$. 
\end{proof}

\subsection{Global existence} 
Let us recall that \cite{Fang} proved global existence an uniqueness of
a $L^4((0,T)\times D)$-valued solution.
A similar result of global existence for a less regular (in time) 
solution holds in our setting.

Let us begin with the case $d=2$ and
consider a process solving equation \eqref{eq-abst} whose paths are in 
$L^{\frac {2p}{p-2}}(0,T;L^p_\sigma)$. 
Its local existence comes from the previous results. 
However we can prove an a priori bound leading to global existence.

Let us multiply equation \eqref{eq-v} by $v$ in $L^2_\sigma$; we obtain 
by classical techniques (see Lemma 4.1 of \cite{Fla}) 
\[\begin{split}
\frac 12 \frac d{dt}\|v(t)\|_{L^2_\sigma}^2+\|\nabla v(t)\|^2_{L^2}&=\langle B(v(t)+z(t),z(t)),v(t)\rangle
\\&
\le\|v(t)+z(t)\|_{L^4_\sigma} \|z(t)\|_{L^4_\sigma} \|\nabla v(t)\|_{L^2}
\\&
\le\frac 12 \|\nabla v(t)\|^2_{L^2}
+\frac C2  \|z(t)\|_{L^4_\sigma}^4 \|v(t)\|_{L^2_\sigma}^2+\frac C2  \|z(t)\|_{L^4_\sigma}^4
\end{split}
\]
Hence 
\[
\frac d{dt}\|v(t)\|_{L^2_\sigma}^2\le C  \|z(t)\|_{L^4_\sigma}^4
\|v(t)\|_{L^2_\sigma}^2+C  \|z(t)\|_{L^4_\sigma}^4 .
\]
As soon as $z$ is a $C([0,T];L^4_\sigma)$-valued process  we get by means of Gronwall lemma that
$v \in L^\infty(0,T;L^2_\sigma)$. And integrating in time the first inequality we also obtain that
$v\in L^2(0,T;H^1)$.
By interpolation $L^\infty(0,T;L^2_\sigma)\cap L^2(0,T;H^1)\subset
L^{\frac {2p}{p-2}}(0,T;H^{1-\frac 2p})$ for $2<p<\infty$.
Using the Sobolev embedding $H^{1-\frac 2p}\subset L^{p}_\sigma$,
we have the a priori estimate for $v$ in the $L^{\frac {2p}{p-2}}(0,T;L^p_\sigma)$ norm, 
which provides the global existence of $v$ and hence of $u$. This holds for $d=2$ and $4\le p<\infty$,
since the global estimate holds when $z$ is $C([0,T];L^4_\sigma)$-valued at least.
\\Notice that for $d=2$ and  $p=4$ we obtain the same result as by Fang,   Sundar and   Viens
 (see Corollary 4.3 in \cite{Fang}). 

Similarly one proceeds when $d=3$. The change is in the Sobolev embedding, 
which depends on the spatial dimension. Thus from $v \in L^\infty(0,T;L^2_\sigma)\cap
L^2(0,T;H^1)$ we get by interpolation that 
$v \in L^{\frac{4p}{3(p-2)}}(0,T;H^{3\frac{p-2}{2p}})$ for $2<p\le 6$. 
Using the Sobolev embedding
$H^{3\frac{p-2}{2p}}\subset L^{p}_\sigma$ we conclude that the 
$L^{\frac{4p}{3(p-2)}}(0,T;L^{p}_\sigma)$-norm of $v$ is bounded. 
Hence the global existence of a solution $v \in L^{\frac{4p}{3(p-2)}}(0,T;L^{p}_\sigma)$ for $4\le p\le 6$ 
 as well as of a solution $u  \in L^{\frac{4p}{3(p-2)}}(0,T;L^{p}_\sigma)$.

\section*{Acknowledgements}
C. Olivera  is partially supported by FAPESP 
		by the grants 2017/17670-0 and 2015/07278-0.
B. Ferrario is  partially supported by INdAM-GNAMPA, 
 by PRIN 2015 
''Deterministic and stochastic evolution equations'' and by
 MIUR -Dipartimenti di Eccellenza Program (2018-2022) - Dept. of Mathematics ''F. Casorati'', University of Pavia.


\end{document}